\documentclass[12pt,letterpaper]{amsart}
\usepackage{euler, epic,eepic,latexsym, amssymb, amscd, amsfonts, xypic, url, color, epsfig}

\input xy
\xyoption{all}

 \newlength{\baseunit}               
 \newcount{\numlines}                
 \setlength{\baseunit}{0.05ex}
  
\setlength{\oddsidemargin}{0cm} \setlength{\evensidemargin}{0cm}
\setlength{\marginparwidth}{0in}
\setlength{\marginparsep}{0in}
\setlength{\marginparpush}{0in}
\setlength{\topmargin}{0in}
\setlength{\headheight}{0pt}
\setlength{\headsep}{0pt}
\setlength{\footskip}{.3in}
\setlength{\textheight}{9.2in}
\setlength{\textwidth}{6.5in}
\setlength{\parskip}{4pt}

\alph{subsection}

\newtheorem*{tmmain}{Main Theorem}
\newtheorem*{tmnl}{Theorem}
\newtheorem{tm}{Theorem}
\newtheorem{pr}[tm]{Proposition}

\newtheorem{co}[tm]{Corollary}

\theoremstyle{definition}

\newtheorem*{assume}{Assumption}
\newtheorem{hypothesis}{Hypothesis}

\theoremstyle{remark}
\newtheorem{rmk}[tm]{Remark}


\newcommand{\bbC}{\mathbf{C}}

\newcommand{\bbG}{\mathbf{G}}

\newcommand{\bbN}{\mathbf{N}}


\newcommand{\calO}{{ \mathcal O}}

\newcommand{\Spec}{ {\operatorname{Spec}}}

\begin{document}
\pagestyle{plain}
\title{Autoduality holds for a degenerating abelian variety}

\address{Dept. of Mathematics, University of South Carolina, Columbia~SC}
\email{kassj@math.sc.edu}

\subjclass[2010]{Primary 14H40; Secondary 14K30, 14D20. }

\author{Jesse Leo Kass}

\begin{abstract}
	We prove that  certain degenerate abelian varieties that include compactified Jacobians,   namely stable semiabelic varieties,  satisfy autoduality.  We establish this result by proving a comparison theorem that relates the associated family of Picard schemes to the N\'{e}ron model, a result of independent interest.  In our proof, a key fact is that the total space of a suitable family of stable semiabelic varieties  has rational singularities.
\end{abstract}

\maketitle

{\parskip=12pt 
In this paper we prove that certain degenerate abelian varieties satisfy autoduality, a result we now recall.  The classical statement of autoduality is a statement about an abelian variety $J_0$ with an ample divisor $\Theta_0$ that defines a principal polarization.  If  $\tau_{x_0} \colon J_0 \to J_0$ denotes  translation by a point $x_0$, then the morphism 
\begin{gather}
	J_0 \to \operatorname{Pic}^{\underline{0}}(J_0/k), \label{Eqn: Autodual} \\
	x_0  \mapsto \calO(\tau_{x_0}^{*} \Theta_0 - \Theta_0) \label{Eqn: Autodual2}
\end{gather}
from $J_0$ to the (identity component of the) Picard scheme is an isomorphism.  The Picard scheme $\operatorname{Pic}^{\underline{0}}(J_0/k)$ is the dual abelian variety, so the fact that \eqref{Eqn: Autodual} is an isomorphism implies that the autoduality theorem holds, i.e.~that $J_0$ is self-dual.

Here we construct an isomorphism analogous to \eqref{Eqn: Autodual} in which the abelian variety is replaced by a singular projective variety, namely a  stable semiabelic variety.  Stable semiabelic varieties are degenerate abelian varieties studied in the moduli theory of abelian varieties in the work of e.g.~Alexeev, Nakamura, and Olsson.  An important example of these varieties is the  compactified Jacobian of a nodal curve, or moduli space of degree $d$ rank $1$, torsion-free sheaves which are required to satisfy a  semistability condition when $X_0$ is reducible.  A stable semiabelic variety is acted upon by a natural semiabelian variety $J^{\underline{0}}$ which equals the moduli space of multidegree $0$ line bundles when $\overline{J}$ is a compactified Jacobian.

Because $\overline{J}_0$ is a (possibly reducible) projective variety, we can form the (identity component of the) Picard scheme $\operatorname{Pic}^{\underline{0}}(\overline{J}_0/k)$, and the main theorem of this paper is that there is an isomorphism from  $J_0^{\underline{0}}$ to $\operatorname{Pic}^{\underline{0}}(\overline{J}_0/k)$ that is analogous to \eqref{Eqn: Autodual}:
\begin{tmmain}[Autoduality]
	The autoduality theorem holds for stable semiabelic varieties.
\end{tmmain}
This is Corollaries~\ref{Co: AutodualityTwo}  (for stable semiablic varieties) and \ref{Co: Autodual} (for compactified Jacobians).  (See the beginning of Section~\ref{Section: Autodual} for a discussion on the relation with stable semiabelic varieties).  

The Main Theorem has special significance when the stable semiabelic variety is a compactified Jacobian $\overline{J}_{0}$ of a nodal curve $X_0$.  For such a $\overline{J}_{0}$, one consequence of the theorem is that  $\operatorname{Pic}^{\underline{0}}(\overline{J}_0/k)$ depends only on the curve $X_0$, rather than on the compactified Jacobian $\overline{J}_0$.  Recall  $\overline{J}_0$ depends on a choice of semistability condition, and different choices produce different schemes.  For example, when $X_0$ equals  two  curves  meeting in 3 nodes, one choice produces a $\overline{J}_0$ with two irreducible components, while another produces a $\overline{J}_0$ with three irreducible components.  (See \cite[Example~13.1(3)]{oda79}.)

The autoduality theorem was known when  $\overline{J}_0$ is a fine compactified Jacobian variety by work we now review.  Recall that a compactified Jacobian is said to be fine when every semistable sheaf is stable, so that $\overline{J}_{0}$ represents a natural functor.  Otherwise we say that $\overline{J}_{0}$ is coarse. The autoduality theorem was proven by Esteves--Gagn{\'e}--Kleiman \cite[Theorem (Autoduality), pages 5-6]{esteves02} when $\overline{J}_{0}$ is the compactified Jacobian of an irreducible curve.  This result was extended by  Esteves--Rocha \cite[pages 414-415]{esteves13}  to  tree-like curves and by  Melo--Rapagnetta--Viviani \cite[Theorem~C]{melo12c} to arbitrary nodal curves.  These authors also prove results for curves with worse singularities  than nodes, and their work has been generalized in various way, e.g.~to curves with planar singularities \cite{arinkin11} and to results about the compactified Picard scheme of $\overline{J}_0$ \cite{esteves05, arinkin13, melo12b}.  The result is new when $\overline{J}_0$ is a coarse compactified Jacobian or a stable semiabelic variety that is not a compactified Jacobian.

In the case of fine compactified Jacobians, the proof of autoduality we give here is different from previous proofs and runs as follows.  A given $\overline{J}_{0}$  can be realized as the closed fiber of a family $\overline{J} \to S$ over $S := \Spec( k[[t]])$ such that the generic fiber is a principally polarized abelian variety.  We first compare  the family $\operatorname{Pic}^{\underline{0}}(\overline{J}/S)/S$ with the N\'{e}ron model of its generic fiber.  (The N\'{e}ron model is an extension of the generic fiber to a $S$-scheme that satisfies a universal mapping property.)  Thus pick  a suitable resolution of singularities $\beta \colon \widetilde{J} \to \overline{J}$ and consider the pullback homomorphism $\beta^* \colon \operatorname{Pic}^{\underline{0}}(\overline{J}/S) \to \operatorname{Pic}^{\underline{0}}(\widetilde{J}/S)$.  In Proposition~\ref{Prop: RatlSing}, we prove $\overline{J}$ has rational singularities, and  this implies the differential of $\beta^*$ --- and hence $\beta^{*}$ itself --- is an isomorphism.  A  
theorem of P{\'e}pin states that $\operatorname{Pic}^{\underline{0}}(\widetilde{J}/S)$  is the identity component of the N\'{e}ron model, so we conclude that:

\begin{tmnl}[Comparison]
	$\operatorname{Pic}^{\underline{0}}(\overline{J}/S)/S$ is the identity component of the N\'{e}ron model of its generic fiber.
\end{tmnl}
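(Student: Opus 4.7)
The plan is to follow the strategy outlined in the introduction: fix a resolution $\beta \colon \widetilde{J} \to \overline{J}$, identify $\operatorname{Pic}^{\underline{0}}(\widetilde{J}/S)$ with the identity component of the N\'eron model via P\'epin's theorem, and show that pullback along $\beta$ identifies the two Picard schemes of interest.

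First I would choose a resolution of singularities $\beta \colon \widetilde{J} \to \overline{J}$. Since the generic fiber of $\overline{J}$ is smooth (being a Jacobian of a smooth curve, or an abelian variety in the quasiabelian case), $\beta$ restricts to an isomorphism over the generic point of $S$; in particular $\widetilde{J}$ is regular and has smooth generic fiber, which coincides with that of $\overline{J}$. P\'epin's theorem then identifies $\operatorname{Pic}^{\underline{0}}(\widetilde{J}/S)$ with the identity component of the N\'eron model of this generic fiber. It therefore suffices to prove that the pullback map $\beta^{*} \colon \operatorname{Pic}^{\underline{0}}(\overline{J}/S) \to \operatorname{Pic}^{\underline{0}}(\widetilde{J}/S)$ is an isomorphism of $S$-group schemes.

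To obtain this isomorphism I would appeal to Proposition~\ref{Prop: RatlSing}, which asserts that $\overline{J}$ has rational singularities, equivalently that $R\beta_{*}\calO_{\widetilde{J}} = \calO_{\overline{J}}$. Cohomology and base change then turn this degeneration of the direct image into an isomorphism on Lie algebras $H^{1}(\overline{J}_{s},\calO) \cong H^{1}(\widetilde{J}_{s},\calO)$ at both the generic and the special point $s \in S$, hence on tangent spaces to the two Picard schemes along their identity sections. Because $\beta^{*}$ is a homomorphism of $S$-group schemes, translation invariance upgrades this to \'etaleness of $\beta^{*}$ everywhere. Combined with the fact that $\beta^{*}$ is an isomorphism on generic fibers (since $\beta$ itself is), connectedness of the identity components and flatness over $S$ force $\beta^{*}$ to be an isomorphism.

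The step I expect to be hardest is the passage from $R\beta_{*}\calO_{\widetilde{J}} = \calO_{\overline{J}}$ on the total space to the isomorphism of Picard schemes, rather than merely of tangent spaces. One has to establish representability and flatness of $\operatorname{Pic}^{\underline{0}}(\overline{J}/S)$ over $S$, to apply cohomology and base change in a setting where the special fiber of $\overline{J}$ is non-normal and typically reducible, and to verify that the special fiber of $\operatorname{Pic}^{\underline{0}}(\overline{J}/S)$ is sufficiently connected to rule out disconnected \'etale covers. Once these technical points are handled, the remaining implications are essentially formal.
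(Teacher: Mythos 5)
Your proposal follows essentially the same route as the paper: resolve $\overline{J}$, invoke P\'epin's theorem for the regular model $\widetilde{J}$ (the paper additionally checks that $P(\widetilde{J}/S)$ equals its own $S$-group smoothening, since P\'epin's result is stated for the smoothening), use rational singularities and the Leray spectral sequence to see that $\beta^{*}$ is an isomorphism on Lie algebras and hence \'etale, and conclude via birationality (the paper makes your final step precise with Zariski's main theorem to get an open immersion) and connectedness of the fibers of $P^{\underline{0}}(\widetilde{J}/S) \to S$. The technical points you flag at the end are exactly those the paper disposes of beforehand via Hypothesis~\ref{Hypothesis} and Corollary~\ref{Corollary: CohomologyFlat}.
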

This theorem is Theorem~\ref{Theorem: NeronComparison} below, and it immediately implies the Main Theorem because the universal mapping property of the N\'{e}ron model implies that the classical autoduality isomorphism of the generic fiber extends over all of $S$.  The comparison theorem is sharp in a sense described in Remark~\ref{Remark: ThmIsSharp}.

The proof just sketched deduces autoduality from the fact that $\overline{J}_0$ deforms in a family $\overline{J}/S$ such that $\overline{J}$ has rational singularities.  By contrast, in the case where $\overline{J}_0$ is a fine compactified Jacobian, the result is deduced from  a description of $\overline{J}_0$ coming from the presentation scheme in \cite{esteves02},   from  autoduality for irreducible curves in \cite{esteves13}, and from the computation of the cohomology of a universal family of sheaves, a computation done by putting $\overline{J}_0$ into a suitable miniversal family, in \cite{melo12c}.  In particular, previous work established autoduality for the compactified Jacobian using descriptions of the compactified Jacobian as a moduli space, while the present work establishes autoduality using facts about the singularities of the total space $\overline{J}$.

A word about the characteristic.  In this paper we assume:
\begin{assume}
	$R$ is a discrete valuation ring with characteristic $0$ residue field $k$.
\end{assume}
We need to make this assumption because we make use of properties of rational singularities.  There is a well-developed theory of rational singularities in characteristic zero, but not in positive characteristic (except for the case of surface singularities).  To extend the proof of the main results of this paper to allow $k$ to have positive characteristic, it would be enough to prove that the total space $\overline{J}$ admits a rational resolution and that Corollary~\ref{Corollary: CohomologyFlat} remains valid.  Based on conversations with singularity theorists, the author believes that these properties are expected to hold in positive characteristic but there is no written reference for such results.

\section*{Notation and conventions}
A  \textbf{curve} $X_0/\Spec(k)$ over a field $k$ is a $k$-scheme that is geometrically connected, geometrically reduced, 1-dimensional, and proper over $k$.  We set $g := 1-\chi(X_0, \calO_{X_0})$ equal to the \textbf{arithmetic genus}.  When $k=\overline{k}$ is algebraically closed, we say that $X_0/\Spec(k)$ is \textbf{nodal} if the completed local ring $\widehat{\calO}_{X_0, x_0}$ of $X_0$ at a point not lying in the $k$-smooth locus is isomorphic to $k[[x,y]]/(xy)$.  In general, we say that $X_0/\Spec(k)$ is nodal if $X_0 \otimes_{k} \overline{k}$ is nodal.  A \textbf{family of curves} over a scheme $T$ is a $T$-scheme $X/T$ that is proper and flat over $T$ and such that the fibers of $X \to T$ are curves.  If the fibers are nodal curves, then we say $X/T$ is a \textbf{family of nodal curves}.  We say that a family of curves $X/S$ is \textbf{regular} if $X$ is a regular scheme (i.e.~at every closed point the Zariski tangent space has dimension equal to the local Krull dimension of $X$).

\section{Comparison with the N\'{e}ron model} \label{Section: NeronComparison}
Here we prove a comparison theorem which states that a suitable family of Picard schemes $\operatorname{Pic}^{\underline{0}}(\overline{J}/S)$ is isomorphic to the identity component of the N\'{e}ron model of its generic fiber, and we use this theorem in later sections to prove autoduality.  We apply the comparison theorem when $\overline{J}/S$ is a family of stable semiabelic varieties or compactified Jacobians, but we work in slightly greater generality in this section: we work with a family $\overline{J}/S$ that satisfies  Hypothesis~\ref{Hypothesis} below, and in later sections, we prove that the hypothesis is satisfied by the families of interest.

For the remainder of this section, we fix the spectrum $S$ of a discrete valuation ring $R$ with field of fractions $K$ and residue field $k$ that we assume has characteristic zero, and let $\overline{J}/S$ be a $S$-flat and $S$-projective $S$-scheme such that the generic fiber $\overline{J}_{K}$ is a torsor for an abelian variety of dimension $g$ and the following hypothesis is  satisfied:
\begin{hypothesis} \label{Hypothesis}
	The scheme $\overline{J}$ has rational singularities, and for all $i$,  the higher direct image $R^{i}p_{*}\mathcal{O}_{\overline{J}}$ under the projection $p \colon \overline{J} \to S$ is a locally free $\mathcal{O}_{S}$-module of rank $\binom{g}{i}$ whose formation commutes with base change.
\end{hypothesis}

Because $\overline{J}/S$ satisfies Hypothesis~\ref{Hypothesis}, we can form the associated \textbf{family of Picard schemes} $\operatorname{Pic}(\overline{J}/S)/S$.  This is the $S$-scheme that represents the fppf sheafification of the functor that assigns to a $S$-scheme $T$ the set $\operatorname{Pic}(\overline{J}_{T})$ of isomorphism classes of line bundles on $\overline{J}_{T}$. The family of Picard schemes exists as a (possibly nonseparated)  $S$-group space that is locally of finite presentation over $S$.  Indeed, because the formation  of the pushforward $p_{*} \calO_{\overline{J}}$ by $p\colon \overline{J} \to S$ commutes with base change, this representability result is \cite[(1.5)]{raynaud70}.  Because  $R^{1}p_{*} \calO_{\overline{J}}$ is locally free and its formation commutes with base change, $\operatorname{Pic}(\overline{J}/S)/S$ contains the \textbf{identity component} $\operatorname{Pic}^{\underline{0}}(\overline{J}/S)/S$, an open $S$-subgroup scheme  that is of finite type and smooth over $S$ and has the property that the fibers of $\operatorname{Pic}^{\underline{0}}(\overline{J}/S) \to S$ are the identity components of the fibers of $\operatorname{Pic}(\overline{J}/S) \to S$  by \cite[Corollary~5.14, Proposition~5.20]{kleiman05}.

We compare $\operatorname{Pic}^{\underline{0}}(\overline{J}/S)$ to the \textbf{N\'{e}ron model} of its generic fiber.  The N\'{e}ron model $N/S$ of $\operatorname{Pic}^{\underline{0}}(\overline{J}_K/K)$ is a $S$-scheme that is smooth over $S$, contains $\operatorname{Pic}^{\underline{0}}(\overline{J}_K/K)$ as the generic fiber, and satisfies the N\'{e}ron mapping property; that is, for every smooth morphism $T \to S$ the natural map 
\begin{equation} \label{Eqn: NeronMapProperty}
	\operatorname{Hom}_{S}(T,N) \to \operatorname{Hom}_{K}(T_{K}, \operatorname{Pic}^{\underline{0}}(\overline{J}_K/K))
\end{equation}
is bijective.  By a theorem of N\'{e}ron  $N/S$ exists and is separated and of finite type over $S$ \cite[Corollary~2, Section~9.7]{bosch90}.  The \textbf{identity component} $N^{\underline{0}}/S$ is defined to be the complement of the connected components of the special fiber $N_{0}$ that do not contain the group identity element $e \in N_{0}(k)$.  By construction $N^{\underline{0}}$ is an open $S$-group subscheme of $N$ such that the fibers of $N^{\underline{0}} \to S$  are connected.

The identity morphism $\operatorname{id}_{K} \colon \operatorname{Pic}^{\underline{0}}(\overline{J}_K/K) \to \operatorname{Pic}^{\underline{0}}(\overline{J}_K/K)$ extends uniquely to a $S$-morphism 
\begin{equation} \label{Eqn: PicToNeron}
	\operatorname{Pic}^{\underline{0}}(\overline{J}/S) \to N^{\underline{0}}
\end{equation}
by the N\'{e}ron mapping property, and we prove:
\begin{tm}[Comparison] \label{Theorem: NeronComparison}
	The morphism \eqref{Eqn: PicToNeron} is an isomorphism.
\end{tm}
\begin{proof}
We prove this theorem by choosing a regular $S$-model $\widetilde{J}/S$  of $\overline{J}/S$, using a result of P\'{e}pin to relate the family of Picard schemes of $\widetilde{J}/S$ to the N\'{e}ron model, and then using the rational singularities hypothesis to show that $\widetilde{J}/S$ and $\overline{J}/S$ have isomorphic families of Picard schemes.

	Let $p \colon \overline{J} \to S$ be the structure morphism.  Because $\overline{J}$ has rational singularities, we can pick a resolution of singularities  $\beta \colon \widetilde{J} \to \overline{J}$ satisfying  $R^{j}\beta_{*}\calO_{\widetilde{J}}=0$ for $j>0$ and $\beta_{*}\calO_{\widetilde{J}}=\calO_{\overline{J}}$.  The Leray spectral sequence $R^{i}p_{*} \circ R^{j} \beta_{*} \calO_{\widetilde{J}} \Rightarrow R^{i+j} (p \circ \beta)_{*} \calO_{\widetilde{J}}$ thus degenerates at the $E_2$ page, so the natural homomorphisms 
	$$
		R^{i}p_{*} \calO_{\overline{J}} = R^{i}p_{*} \circ R^{0} \beta_{*} \calO_{\widetilde{J}} \to R^{i} (p \circ \beta)_{*} \calO_{\widetilde{J}}
	$$ 
	are isomorphisms.   In particular, the direct image $R^{1} (p \circ \beta)_{*}\calO_{\widetilde{J}}$ is locally free of rank $g$ and its formation commutes with base change.
	
	This shows that the hypothesis of \cite[(1.5)]{raynaud70} holds for $\widetilde{J}$, so the family of Picard schemes $\operatorname{Pic}(\widetilde{J}/S)/S$ exists as a $S$-group space that is locally of finite presentation over $S$.  The identity component $\operatorname{Pic}^{\underline{0}}(\widetilde{J}/S)$ is canonically isomorphic to the identity component of the N\'{e}ron model of its generic fiber by P\'{e}pin's result  \cite[Proposition~10.3]{pepin13}.  The claimed result is not exactly the statement of the proposition, but we can deduce it as follows. The proposition states that the identity component of the N\'{e}ron model is canonically isomorphic to the identity component of the $S$-group smoothening (in the sense of \cite[page~174]{bosch90}) of the closure of $\operatorname{Pic}^{\underline{0}}(\widetilde{J}_{K}/K)$ in $\operatorname{Pic}(\widetilde{J}/S)$.  The relevant closure is its own $S$-group smoothening because the closure is $S$-flat (as the generic fiber is dense) and the fibers of the morphism to $S$ are smooth (by \cite[Corollay~5.15]{kleiman05} and the fact that $R^{1} (p \circ \beta)_{*}\calO_{\widetilde{J}}$ satisfies the analogue of Corollary~\ref{Corollary: CohomologyFlat}).  Finally,  $\operatorname{Pic}^{\underline{0}}(\widetilde{J}/S)$ is contained in the closure because $\operatorname{Pic}^{\underline{0}}(\widetilde{J}/S)$ is smooth (and hence flat) over $S$.  In particular, $\operatorname{Pic}^{\underline{0}}(\widetilde{J}/S)$ is the identity component of the closure, deducing the desired result from P\'{e}pin's proposition.
	
	Since P\'{e}pin's result shows that the morphism 
	$$
		\operatorname{Pic}^{\underline{0}}(\widetilde{J}/S) \to N^{\underline{0}}
	$$
	extending the identity map is an isomorphism, to prove the theorem, it is enough to show that 
	\begin{gather*}
		\beta^{*} \colon \operatorname{Pic}^{\underline{0}}(\overline{J}/S) \to \operatorname{Pic}^{\underline{0}}(\widetilde{J}/S), \\
		M \mapsto \beta^{*}(M)
	\end{gather*}
	is an isomorphism.  
	
	The map $\beta^{*}$ induces   on Lie algebras is the natural homomorphism
	$$
		R^{1}p_{*} \calO_{\overline{J}} \to R^{1}(p \circ \beta)_{*} \calO_{\widetilde{J}},
	$$	
	and we already observed that this is an isomorphism.  We conclude that $\beta^*$ is \'{e}tale.  In particular, $\beta^*$ has finite fibers.  The morphism is also birational ($\beta_{K}^{*}$ is an isomorphism), so $\beta^*$ must be an open immersion by Zariski's main theorem.  Because the fibers of $\operatorname{Pic}^{\underline{0}}(\widetilde{J}/S) \to S$ are connected, the only open $S$-subgroup scheme of $\operatorname{Pic}^{\underline{0}}(\widetilde{J}/S)$ is $\operatorname{Pic}^{\underline{0}}(\widetilde{J}/S)$ itself, and so $\beta^*$ is an isomorphism.
\end{proof}

\begin{rmk} \label{Remark: ThmIsSharp}
	The comparison theorem is sharp in the following sense.  The theorem shows that the identity component of the N\'{e}ron model is isomorphic to an open $S$-subgroup scheme of $\operatorname{Pic}(\overline{J}/S)$, and one can ask if there is a larger open subgroup scheme that is isomorphic to the N\'{e}ron model.  Without additional hypotheses, no such larger subgroup scheme exists.  We demonstrate this with the following example, which is a compactified Jacobian.

	Let $S$ equal $\Spec(\bbC[t]_{(t)})$ (the localization of $\bbC[t]$ at $(t)$), $X$  the minimal regular model of $\Spec(R[x,y]/(y^2-x^3-x^2-t^2))$, and $\overline{J}/S$ the  family of degree $0$ compactified Jacobians associated to any family of ample line bundles.  (A computation shows that in this special case the semistability condition is independent of the ample line bundle.)  Observe that $\Spec(R[x,y]/(y^2-x^3-x^2-t^2))$ has a singularity at the closed point $(x, y, t)$, so $X$ is a blow up of a compactification of  $\Spec(R[x,y]/(y^2-x^3-x^2-t^2))$, and a computation shows that the special fiber $X_0$ of $X$ consists of two rational curves meeting in two nodes.  
	
	The family $X/S$ is a family of genus $1$ nodal curves with reducible special fiber, and $\overline{J}/S$ is a family of genus $1$ curves with irreducible special fiber.    Since $\overline{J}/S$ is a family of curves, $\operatorname{Pic}^{\underline{0}}(\overline{J}/S)$ is flat over $S$ and thus $\operatorname{Pic}^{\underline{0}}(\overline{J}/S)$  is equal to the closure of its generic fiber in  $\operatorname{Pic}(\overline{J}/S)$.  We can conclude that $\operatorname{Pic}^{\underline{0}}(\overline{J}/S)/S$ is the largest subgroup scheme of $\operatorname{Pic}(\overline{J}/S)$ that contains the identity component $\operatorname{Pic}^{\underline{0}}(\overline{J}/S)$ and  is isomorphic to an open subgroup scheme of the N\'{e}ron model (for any open scheme of the N\'{e}ron model has dense generic fiber by $S$-smoothness).
	
	The identity component $\operatorname{Pic}^{\underline{0}}(\overline{J}/S)$ is not, however, the N\'{e}ron model of its generic fiber because the N\'{e}ron model has disconnected special fiber.  (The elliptic curve $J_K$ has reduction type $I_2$ in Kodaira's classification \cite[Theorem~8.2]{silverman94}.)  The theorems \cite[Th\'{e}or\'{e}me~9.3]{pepin13} and \cite[Th\'{e}or\'{e}me~8.1.4]{raynaud70} suggest that one should not ask for an open subgroup of $\operatorname{Pic}(\overline{J}/S)$ isomorphic to the N\'{e}ron model, but rather for an open subgroup scheme whose maximal separated quotient is isomorphic to the N\'{e}ron model.  In the example just discussed,  $\operatorname{Pic}(\overline{J}/S)$ is separated, so again no such open subgroup scheme exists.  
\end{rmk}
	
\section{Autoduality for degenerate abelian varieties} \label{Section: DegAbVar}
Here we use the comparison theorem, Theorem~\ref{Theorem: NeronComparison}, to prove that stable semiabelic varieties, certain degenerations of principally polarized abelian varieties, satisfy an autoduality theorem.  Stable semiabelic varieties include many compactified Jacobians, but we also give a self-contained treatment of the autoduality theorem for compactified Jacobians in later sections.  The reader interested only in compactified Jacobians is advised to skip ahead to Section~\ref{Section: Autodual}.

In this section $S$ is the spectrum of a discrete valuation ring $R$ with field of fractions $K$ and residue field $k$ that we assume has characteristic zero.   

The degenerations we study are  degenerations to a \textbf{stable semiabelic variety}, a type of degeneration defined by Alekeev and Nakamura in \cite{alexeev99}. (Note: In loc.~cit.~the term ``stable quasiabelian'' is used instead, but currently ``stable semiabelic'' is more commonly used.) In proving that stable semiabelic varieties satisfy autoduality, we make use of relatively few properties of these varieties.  Alexeev and Nakamura construct stable semiabelic varieties using Mumford's construction: starting with a family of semiabelian varieties $G/S$, they form an explicit sheaf of graded $\mathcal{O}_{G}$-algebras $\mathcal{A}$, construct an action of a discrete group $Y$ on $\operatorname{Proj}(\mathcal{A})$, and then define the quotient of $\operatorname{Proj}(\mathcal{A})$ by $Y$ to be the associated family of stable semiabelic varieties $\overline{J}/S$.  In particular, the construction provides an explicit description of the local structure of $\overline{J}$, and we use that description to prove that $\overline{J}$ satisfies Hypothesis~\ref{Hypothesis} and then we prove the autoduality theorem using Theorem~\ref{Theorem: NeronComparison}.  

While we only make use of the construction in \cite{alexeev99}, to provide context, we recall some results from \cite{alexeevb}.  In that paper,  Alexeev characterizes stable semiabelic varieties as the pairs $(\overline{J}_0, J_0^{\underline{0}})$ consisting of a  (possibly reducible) projective variety $\overline{J}_0/\Spec(k)$ with an action of  a semiabelian variety $J_0^{\underline{0}}$  that satisfies the following properties:
\begin{enumerate}
	\item The dimension of each irreducible component of $\overline{J}_0$ is equal to the dimension of $J^{\underline{0}}_{0}$;
	\item There are only finitely many orbits for the $J^{\underline{0}}_{0}$-action;
	\item the stabilizer of every point of $\overline{J}_{0}$ is connected, reduced, and contained in the maximal multiplicative torus of $J^{\underline{0}}_{0}$;
	\item $\overline{J}_0$ is seminormal.
\end{enumerate}

Semiabelic varieties appear in a stable reduction theorem \cite[Theorem~5.7.1]{alexeevb} satisfied by principally polarized abelian varieties.  A principally polarized abelian variety can be identified with a triple $(\overline{J}_{K}, J^{\underline{0}}_{K}, \Theta_{K})$ consisting of an abelian variety $J^{\underline{0}}_{K}$, a $J^{\underline{0}}_{K}$-torsor $\overline{J}_{K}$, and a Cartier divisor $\Theta_{K} \subset \overline{J}_{K}$ defining a principal polarization, and after possibly passing to a ramified extension of $R$, the pair $(\overline{J}_{K}, J^{\underline{0}}_{K})$ can be extended to a pair of $S$-flat $S$-schemes  $(\overline{J}, J^{\underline{0}})$ such that $J^{\underline{0}}$ is a family of semiabelian varieties acting on $\overline{J}$ with special fiber a stable semiabelic variety.  

The extension $(\overline{J}, J^{\underline{0}})$ is not unique, but it becomes unique if one requires that $\Theta_{K}$ extends in a suitable way.  By construction, $\Theta_{K}$ extends to a family of effective Cartier divisors $\Theta \subset \overline{J}$.  The extension is unique if we require that the special fiber $\Theta_0 \subset \overline{J}_0$ is ample and does not contain an orbit of $J^{\underline{0}}_{0}$, i.e.~that $(\overline{J}_{0}, \Theta_{0}, J^{\underline{0}})$ is a family of stable semiabelic pairs.

As was explained in the introduction, the ample divisor $\Theta_{K}$ determines an autoduality isomorphism
\begin{gather} \label{Eqn: AutodualityForAbVar}
	J^{\underline{0}}_{K} \to \operatorname{Pic}^{\underline{0}}(\overline{J}_{K}) \\
	x \mapsto \calO(\tau_{x}^{*}(\Theta_{K})-\Theta_{K}), \notag
\end{gather}
where $\tau_{x}$ is translation by $x$.  We prove that this isomorphism extends for stable semiabelic varieties:

\begin{co}[Autoduality] \label{Co: AutodualityTwo}
	If $(\overline{J}, J^{\underline{0}})$ is a family of stable semiabelic varieties extending $(\overline{J}_K, J^{\underline{0}}_K)$, then \eqref{Eqn: AutodualityForAbVar} extends to an isomorphism
	$$
		J^{\underline{0}} \cong \operatorname{Pic}^{\underline{0}}(\overline{J}/S).
	$$
\end{co}
\begin{proof}
	We show that $\overline{J}$ satisfies Hypothesis~\ref{Hypothesis} and then deduce autoduality using the universal property of the N\'{e}ron model.  To see that Hypothesis~\ref{Hypothesis} is satisfied, observe that the completed local ring of  $\overline{J}$ at a closed point is isomorphic to the completed local ring of an affine toric variety, essentially by the construction of $\overline{J}$ \cite[Theorem~3.8(i)]{alexeev99} (in that theorem, it is assumed that the special fiber $J^{\underline{0}}_0$ is a multiplicative torus, but the local structure for general $J^{\underline{0}}_{0}$ is the same as for $J^{\underline{0}}_{0}$ a multiplicative torus;  see e.g.~the proof of \cite[Lemma~4.1]{alexeev99}).   Since toric singularities are rational, we conclude that $\overline{J}$ has rational singularities.  
	
	We also need to show that the direct image $R^{i}p_{*} \mathcal{O}_{\overline{J}}$ is a locally free $\calO_{S}$-module of rank $\binom{g}{i}$ whose formation commutes with base change.  By Grauert's theorem on cohomology and base change, it is enough to show $h^{i}( \overline{J}_{K}, \calO_{\overline{J}_{K}}) = h^{i}( \overline{J}_{0}, \calO_{\overline{J}_{0}}) = \binom{g}{i}$.  For $J_{K}$, this is \cite[Corollary~2, page~121]{mumford08}, and for $\overline{J}_{0}$, it is \cite[Theorem~4.3]{alexeev99}.  
	
	We immediately deduce the result by applying Theorem~\ref{Theorem: NeronComparison} (the comparison theorem) and using the N\'{e}ron mapping property.  In detail, there are  $S$-isomorphisms
	\begin{gather}
		J^{\underline{0}} \cong (N^{\vee})^{\underline{0}} \text{ by \cite[Theorem~1, page~286]{bosch90}} \label{Eqn: NeronIsoOne} \\
		\operatorname{Pic}^{\underline{0}}(\overline{J}/S) \cong N^{\underline{0}} \text{ by Theorem~\ref{Theorem: NeronComparison}} \label{Eqn: NeronIsoTwo}
	\end{gather}  
	uniquely determined by the requirement that they  restrict to the identity on the generic fiber.  Here $N^{\vee}$ is the N\'{e}ron model of $\overline{J}_K$ and $N$ is the N\'{e}ron model of $\operatorname{Pic}^{\underline{0}}(\overline{J}_{K}/K)$.
	
	The autoduality isomorphism \eqref{Eqn: AutodualityForAbVar} extends to an isomorphism 
	\begin{equation} \label{Eqn: NeronDuality}
		N \cong N^{\vee}
	\end{equation}
	by the N\'{e}ron mapping property, hence to an isomorphism on identity components, which we just identified with $\operatorname{Pic}^{\underline{0}}(\overline{J}/S)$ and $J^{\underline{0}}$ respectively.
\end{proof}

\begin{rmk}
	Observe that in Corollary~\ref{Co: AutodualityTwo} the extension $(\overline{J}, J^{\underline{0}})$ is not unique because we do not require that  $(\overline{J}_{K}, J_{K}^{\underline{0}}, \Theta_{K})$ extends to a family of   stable semiabelic \emph{pairs}).  Consequently, for a given $(\overline{J}_{K}, J^{\underline{0}}_{K})$,  there can be many extensions to a family $(\overline{J}, J^{\underline{0}})$ of stable semiabelic varieties, but by the corollary, all these extensions have isomorphic Picard varieties.
\end{rmk}

\section{Autoduality for compactified Jacobians} \label{Section: Autodual}
Here we use Theorem~\ref{Theorem: NeronComparison}, the comparison theorem, to prove that the compactified Jacobian of a nodal curve satisfies autoduality and furthermore that the autoduality isomorphism is induced by an Abel map when an Abel map exists.  There are several different constructions of compactified Jacobians in the literature, and many of these are known to produce stable semiabelic varieties (e.g.~\cite[Theorem~5.1]{alexeev04} states that the $\phi$-compactified Jacobians from \cite{oda79} are stable semiabelic varieties), and for compactified Jacobians that are stable semiabelic varieties, we could deduce autoduality using Section~\ref{Section: DegAbVar}, but here we give a self-contained proof.  

We fix the spectrum $S$ of a discrete valuation ring $R$ with field of fractions $K$ and residue field $k$ that we assume has characteristic zero.  We let $X/S$ be a family of curves, $J^{\underline{0}}/S$ the associated family of generalized Jacobians (or moduli spaces of multidegree $0$ line bundles), and  $\overline{J}/S$ an associated family of compactified Jacobians.  

Here we take a compactified Jacobian to be one of the moduli spaces constructed in \cite{simpson94}.  In other words, we fix an integer $d$ and a family $A$ of ample line bundles on $X/S$, and let $\overline{J}$ denote the $S$-projective scheme that universally corepresents the functor that assigns to a $S$-scheme $T$ the set of isomorphism classes of families of  degree $d$ rank 1, torsion-free sheaves on $X \times_{S} T$ that are semistable with respect to $A \otimes_{S} \calO_{T}$.  The scheme $\overline{J}$ exists by \cite[Theorem~1.21]{simpson94}.  (Note: the moduli space described in loc.~cit.\ includes pure sheaves that fail to have rank 1, and $\overline{J}$ is a connected component of this larger moduli space; when stability coincides with semistability, this is shown in  \cite[Section~4.2]{kass13}, and the semistable case can be treated by applying the argument in loc.~cit.~to a suitable Quot scheme.)   We say that  the special fiber $\overline{J}_0$ is \textbf{fine}  if every degree $d$ semistable rank 1, torsion-free sheaf is stable, and otherwise we say $\overline{J}_0$ is \textbf{coarse}.  

While we limit our study to the moduli spaces from \cite{simpson94}, the author expects  the argument below is valid for the other compactified Jacobians that have been constructed.  Indeed, the key results we use about $\overline{J}$ are Propositions~\ref{Prop: RatlSing}  and \ref{Proposition: DuBois}, and the proofs of these propositions remain valid  for any family of compactified Jacobians that is a moduli space of rank 1, torsion-free sheaves that either  is fine or is constructed using Geometric Invariant Theory, and to the author's knowledge,  this includes  all compactified Jacobians in the literature.

\subsection{The singularities of a compactified Jacobian} \label{Section: SingThy}
Here we prove that compactified Jacobians satisfy Hypothesis~\ref{Hypothesis} of Section~\ref{Section: NeronComparison}, i.e.~that the comparison theorem applies.  We prove the results of this section using a local description of $\overline{J}/S$ obtained from deformation theory.  When $k=\overline{k}$ is algebraically closed and $\overline{J}/S$ is a family of fine compactified Jacobians,  the completed local ring of $\overline{J}$ at a closed point $x_0 \in \overline{J}$ can be described as:
\begin{equation} \label{Eqn: DefRingInFam}
	\widehat{O}_{\overline{J}, x_0} \cong \widehat{R}[[u_1, v_1, \dots, u_n,v_n, w_1, \dots, w_m]]/(u_1 v_1 - \pi, \dots, u_n v_n - \pi)
\end{equation}
for some uniformizer $\pi \in R$ and some integers $n, m \in \bbN$.  This is \cite[Lemma~6.2]{kass09}, a result proven using the deformation theory techniques used in  \cite{kass12}.  

When $\overline{J}/S$ is a family of coarse compactified Jacobians, the Luna slice argument used in  loc.~cit.~shows that there is a multiplicative torus $\bbG_{m}^{b}$ acting on the ring appearing on the right-hand side of Equation~\eqref{Eqn: DefRingInFam} such that the torus invariant subring is isomorphic to $\widehat{\calO}_{\overline{J}, x_0}$.  Using this result, we prove:
\begin{pr} \label{Prop: RatlSing}
	$\overline{J}$ has rational singularities, and $\overline{J} \to S$ is flat.
\end{pr}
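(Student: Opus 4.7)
The plan is to derive both assertions from the explicit local description of $\overline{J}$ recorded immediately above the proposition. Since $\overline{J}$ is excellent, rational singularities and flatness can both be verified on completed local rings at closed points, and both properties are preserved under faithfully flat descent; it therefore suffices to analyze the concrete ring in Equation~\eqref{Eqn: DefRingInFam} in the fine case and its ring of $\bbG_m^k$-invariants in the coarse case.

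In the fine case, I would first note that since $k$ has characteristic zero, Cohen's structure theorem gives $\widehat{R} \cong k[[\pi]]$. Modding out by $u_1 v_1 - \pi$ eliminates $\pi = u_1 v_1$, so the ring in \eqref{Eqn: DefRingInFam} is the completion at the origin of
\[
k[u_1,v_1,\dots,u_n,v_n,w_1,\dots,w_m]/(u_iv_i - u_1v_1 : i = 2,\dots,n).
\]
This is the affine coordinate ring of a normal toric variety on which an $(n+m+1)$-dimensional torus acts with a dense open orbit by scaling each pair $(u_i, v_i)$ and each $w_j$ compatibly with the relations $u_iv_i = u_1v_1$. Normal affine toric varieties over a field of characteristic zero are Cohen--Macaulay and have rational singularities (Kempf--Knudsen--Mumford--Saint-Donat), which settles the rational singularity claim. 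Flatness over $R$ is immediate: $u_1v_1 - \pi,\dots,u_nv_n - \pi$ is a regular sequence in $\widehat{R}[[u_i,v_i,w_j]]$ and $\pi$ remains a nonzerodivisor in the quotient, so the ring of \eqref{Eqn: DefRingInFam} is $R$-flat.

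For the coarse case, the completed local ring of $\overline{J}$ at $x$ is the invariant subring $A^{\bbG_m^k}$ for a ring $A$ of the form analyzed above, under a linearly reductive torus action. Boutot's theorem in characteristic zero asserts that if $A$ has rational singularities and $B \hookrightarrow A$ is a pure subring, then $B$ has rational singularities as well. The inclusion $A^{\bbG_m^k} \hookrightarrow A$ splits via the Reynolds operator, hence $A^{\bbG_m^k}$ inherits rational singularities from $A$, and the same splitting transfers $R$-flatness from $A$ to $A^{\bbG_m^k}$.

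The main obstacle I expect to encounter is making the reduction to completed local rings at closed points fully rigorous --- in particular, verifying faithfully flat descent of rational singularities, and propagating the description of \eqref{Eqn: DefRingInFam} from the algebraically closed residue field case to arbitrary residue fields via passage to the strict henselization of $\widehat{\calO}_{\overline{J},x}$. Once these reductions are in place, the remainder of the argument is essentially formal from the theory of toric singularities and Boutot's theorem.
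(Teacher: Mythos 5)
Your proposal is correct and follows essentially the same route as the paper's proof: reduce to $k=\overline{k}$ via the strict henselization, identify the completed local ring in the fine case with the completion of the coordinate ring of an affine toric variety (which has rational singularities), get flatness from the regular-sequence/nonzerodivisor criterion over the DVR, and handle the coarse case by Boutot's theorem plus the direct-summand property of the torus-invariant subring. The only differences are cosmetic (invoking Cohen's structure theorem versus choosing a coefficient field, and your slightly more explicit framing of the descent step).
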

\begin{proof}
	We can assume $k=\overline{k}$ because it is enough to prove the result after passing from $R$ to its strict henselization $R^{\text{sh}}$.  With this assumption, suppose first that $\overline{J}/S$ is a family of fine compactified Jacobians.  The morphism $\overline{J} \to S$ is flat because the ring appearing in Equation~\eqref{Eqn: DefRingInFam} is the quotient of a power series ring by elements whose images in $\widehat{R}[[u_1, v_1, \dots, w_m]]/(\pi)$ form a regular sequence \cite[Corollary to Theorem~22.5]{matsumura89}.  To see that $\overline{J}$ has rational singularities, observe that $\widehat{\calO}_{\overline{J}, x_0}$ is isomorphic to the  completion of  $k[u_1, v_1, \dots, u_n, v_n, w_1, \dots, w_m]/(u_1 v_1-u_2 v_2, \dots, u_1 v_1 - u_n v_n)$, which is the coordinate ring of an affine toric variety.  (An isomorphism is determined by a choice of coefficient field $k \subset R$.) Since toric varieties have rational singularities, so does $\overline{J}$, proving the proposition when $\overline{J}$ is fine.

	When $\overline{J}/S$ is  coarse, the argument just given shows that, if $x_0 \in \overline{J}$ is a closed point, then $\widehat{\calO}_{\overline{J}, x_0}$ is the torus invariant subring of a  ring that is $R$-flat and has rational singularities.  In particular,  $\widehat{\calO}_{\overline{J}, x_0}$  has rational singularities by \cite[Corollary, page~66]{boutot} and is flat over $R$ as it is a direct summand of a flat module.
\end{proof}

From Equation~\eqref{Eqn: DefRingInFam}, we  deduce that when $k$ is algebraically closed and $\overline{J}_0$ is a fine compactified Jacobian, the completed local ring of $\overline{J}_0$ at a closed point $x_0 \in \overline{J}_0$ is of the form
\begin{equation} \label{Eqn: DefRingInIso}
	\widehat{O}_{\overline{J}_{0}, x_0} \cong k[[u_1, v_1, \dots, u_n, v_n, w_1, \dots, w_m]]/(u_1 v_1, \dots, u_n v_n).
\end{equation}
When $\overline{J}_0$ is coarse, $\widehat{O}_{\overline{J}_{0}, x_{0}}$ is isomorphic to the invariant subring of the ring appearing on the right-hand side of Equation~\eqref{Eqn: DefRingInIso} for some action of a multiplicative torus $\bbG_{m}^{b}$.  We use these descriptions to prove:

\begin{pr}	\label{Proposition: DuBois}
	$\overline{J}_{0}$ has Du Bois singularities.
\end{pr}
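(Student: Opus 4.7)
The plan is to run the same local analysis used in Proposition~\ref{Prop: RatlSing}, now invoking Equation~\eqref{Eqn: DefRingInIso} in place of Equation~\eqref{Eqn: DefRingInFam} and substituting ``Du Bois'' for ``rational'' throughout. Since the Du Bois property can be checked on the completed local rings at closed points in characteristic zero, and since it suffices to prove the claim after passing from $R$ to its strict henselization, I may assume $k = \overline{k}$.

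Suppose first that $\overline{J}_0$ is a fine compactified Jacobian. By Equation~\eqref{Eqn: DefRingInIso}, at each closed point $x_0 \in \overline{J}_0$ the completed local ring is isomorphic to
\[
k[[u_1, v_1, \dots, u_n, v_n, w_1, \dots, w_m]]/(u_1 v_1, \dots, u_n v_n),
\]
which is the completion at the origin of the product scheme
\[
V(u_1 v_1) \times_k \cdots \times_k V(u_n v_n) \times_k \bbA^m_k.
\]
Each factor $V(u_i v_i) = \Spec k[u_i, v_i]/(u_i v_i)$ is a plane node, hence a simple normal crossings variety, and simple normal crossings varieties have Du Bois singularities. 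Products of Du Bois schemes over $k$ are Du Bois (by the K\"{u}nneth decomposition of the Du Bois complex), and $\bbA^m_k$ is smooth, so $\overline{J}_0$ is Du Bois at every closed point in the fine case.

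When $\overline{J}_{0}$ is coarse, $\widehat{\calO}_{\overline{J}_0, x_0}$ is the invariant subring of the ring displayed above under a suitable action of a multiplicative torus $\bbG_m^{k}$. Since a torus is linearly reductive in characteristic zero, the inclusion of invariants admits a splitting given by the Reynolds operator, exhibiting $\widehat{\calO}_{\overline{J}_0, x_0}$ as a direct summand of a Du Bois ring. Kov\'{a}cs's Du Bois analog of Boutot's theorem --- that a pure subring of a Du Bois ring is again Du Bois --- then yields that $\widehat{\calO}_{\overline{J}_0, x_0}$ is Du Bois, and the coarse case is complete.

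The principal obstacle I anticipate is bibliographic rather than geometric: one must supply precise references for (i) stability of the Du Bois property under products of reduced $k$-schemes, and (ii) Kov\'{a}cs's result that direct summands (equivalently, linearly reductive quotients) of Du Bois rings are Du Bois, in the scheme-theoretic generality used here rather than only for complex analytic varieties. Both statements are in the literature on singularities of the minimal model program, and once they are cited the geometric content is essentially the local computation already present in Proposition~\ref{Prop: RatlSing}.
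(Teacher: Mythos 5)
Your proof is correct and follows essentially the same route as the paper: reduce to $k=\overline{k}$, observe via Equation~\eqref{Eqn: DefRingInIso} that in the fine case the completed local ring is a completed product of nodes and a power series ring (the paper cites \cite[Example~3.3, Theorem~3.9]{doherty08} for the product statement you attribute to a K\"{u}nneth argument), and in the coarse case pass to the torus-invariant subring and apply \cite[Corollary~2.4]{kovas99}, with the direct-summand observation supplying the needed splitting. The references you flag as the remaining obstacle are exactly the ones the paper uses.
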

\begin{proof}
	We can assume $k=\overline{k}$.  When $\overline{J}_{0}$ is fine, Equation~\eqref{Eqn: DefRingInIso} shows that the completed local ring of $\overline{J}_0$ at a closed point $x_0$ is a completed product of double normal crossing singularity rings and power series rings, and such a completed product is Du Bois by \cite[Example~3.3, Theorem~3.9]{doherty08}.  When $\overline{J}_0$ is coarse, the completed local ring of $\overline{J}_0$ at a closed point is a torus invariant subring of a Du Bois local ring and hence is itself Du Bois by \cite[Corollary~2.4]{kovas99} (the left inverse hypothesis is satisfied because a torus invariant subring is a direct summand).
\end{proof}

From Proposition~\ref{Proposition: DuBois}, we deduce that $\overline{J}/S$ satisfies Hypothesis~\ref{Hypothesis}:
\begin{co} \label{Corollary: CohomologyFlat}
	The higher direct image $R^{i}p_{*} \calO_{\overline{J}}$ of $\calO_{\overline{J}}$ under the projection $p \colon \overline{J} \to S$ is a locally free $\calO_{S}$-module of rank $g \choose i$, and its formation commutes with arbitrary base chance.
\end{co}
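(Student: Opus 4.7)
The plan is to exploit the Du Bois property of $\overline{J}_0$, the flatness of $p$, and the smoothness of the generic fiber in order to show that $s \mapsto \dim H^i(\overline{J}_s, \calO_{\overline{J}_s})$ is constant on $S$, and then to apply Grauert.

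First observe that since $X/S$ is a regular family of nodal curves and $k$ has characteristic zero, the generic fiber $X_K$ is a smooth projective curve of genus $g$. Consequently the generic fiber $\overline{J}_K$ of $p$ is the classical Jacobian of $X_K$, an abelian variety of dimension $g$, and the standard computation on an abelian variety gives $\dim_K H^i(\overline{J}_K, \calO_{\overline{J}_K}) = \binom{g}{i}$. By Proposition~\ref{Prop: RatlSing} the morphism $p$ is flat, so upper semicontinuity of cohomology on the base yields $\dim_k H^i(\overline{J}_0, \calO_{\overline{J}_0}) \geq \binom{g}{i}$.

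The crux is the reverse inequality, which I would deduce from the deformation invariance of the Hodge number $h^{i,0}$ along Du Bois degenerations. The relevant statement is: for a proper flat morphism $Y \to T$ over a smooth curve that is smooth over $T \setminus \{0\}$ and whose central fiber $Y_0$ has Du Bois singularities, the function $t \mapsto h^i(Y_t, \calO_{Y_t})$ is locally constant on $T$. This rests on Du Bois's identification of $H^i(Y_0, \calO_{Y_0})$ with the zeroth graded piece of the Hodge filtration on the (mixed) Hodge structure of $H^i(Y_0)$, combined with the specialization isomorphism relating $H^*(Y_0)$ to the cohomology of a smooth nearby fiber and the compatibility of that isomorphism with the Hodge filtration. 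Applied to $p\colon \overline{J} \to S$, which has smooth generic fiber and (by Proposition~\ref{Proposition: DuBois}) Du Bois special fiber, this forces $\dim_k H^i(\overline{J}_0, \calO_{\overline{J}_0}) = \binom{g}{i}$.

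With the fiberwise dimensions of $H^i(-, \calO)$ now known to equal $\binom{g}{i}$ on both fibers of $p$, the cohomology-and-base-change theorem (Grauert; e.g.\ Hartshorne III.12.9, applied inductively in descending $i$) implies that $R^i p_* \calO_{\overline{J}}$ is locally free of rank $\binom{g}{i}$ and that its formation commutes with arbitrary base change. The main obstacle in this outline is the deformation invariance statement for Du Bois fibers; for this one would cite (or adapt to the algebraic setting over $S$) the appropriate results of Du Bois, Steenbrink, and Koll\'ar--Kov\'acs. The remaining ingredients, namely the Hodge theory of abelian varieties and the base-change theorems, are standard.
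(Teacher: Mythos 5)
Your argument is essentially the paper's: the proof there simply cites Du Bois's Th\'eor\`eme~4.6 --- the deformation invariance of $h^i(\mathcal{O})$ for proper flat families with Du Bois fibers, which is exactly your key step --- together with Mumford's computation $h^i(A,\mathcal{O}_A)=\binom{g}{i}$ for abelian varieties to identify the rank on the (smooth) generic fiber. The semicontinuity-plus-Grauert packaging you add on top is standard and consistent with the intended argument.
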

\begin{proof}
	The fibers of $\overline{J} \to S$ have Du Bois singularities, so the result is \cite[Th\'{e}or\`{e}me~4.6]{dubois81} together with \cite[Corollary~2, page~121]{mumford08}.
\end{proof}

 \begin{rmk}
		The results in this section imply that Theorem~\ref{Eqn: PicToNeron} (the comparison theorem) applies to  $\overline{J}/S$.  When $\overline{J}$ is regular, or more generally semi-factorial, the stronger result \cite[Th\'{e}or\'{e}me~9.3]{pepin13} applies, but  $\overline{J}/S$ can fail to be semi-factorial, as the simple example of the degenerating elliptic curve from  Remark~\ref{Remark: ThmIsSharp} shows.  (There we observed that the conclusion of \cite[Th\'{e}or\'{e}me~9.3]{pepin13} does not hold.)
 \end{rmk}

\subsection{Proof of autoduality}
In the introduction we defined  the autoduality isomorphism \eqref{Eqn: Autodual} in terms of an ample divisor, but for Jacobians the isomorphism can alternatively be defined in terms of the Abel map.  If  $L_K$ is a degree $-1$ line bundle on $X_K$, then the rule 
\begin{equation} \label{Eqn: DefAbelMap}
	 x_{K} \mapsto L_{K}(x_{K})
\end{equation}
 defines a morphism $\alpha_{K} = \alpha_{L_K} \colon X_K \to \overline{J}_K$ that is the Abel map associated to $L_K$.  The pullback morphism 
\begin{equation} \label{Eqn: AutodualForCurve}
 	\alpha_K^{*} \colon \operatorname{Pic}^{\underline{0}}(\overline{J}_K/K) \to J^{\underline{0}}_K
\end{equation}
is, up to sign, the inverse of the autoduality isomorphism  \eqref{Eqn: Autodual} from the introduction.  

If $L$ is a line bundle on $X$ that extends $L_K$, then the rule $x \mapsto L(x)$ defines a morphism $\alpha$ extending $\alpha_K$ provided the fibers of $X/S$ are irreducible \cite[2.2]{esteves02},  but when $X_0$ is reducible, the expression can fail to define a morphism because $L(x)$ can fail to be semistable.    The problem of constructing a $L$ such that $L(x)$ is always semistable (i.e.~of constructing an Abel map for a reducible curve) is nontrivial.  This and related problems are studied in \cite{caporaso07b, caporaso07, coelho08, coelho10}, and for further discussion on the topic, we direct the reader to those papers.

Regardless of whether the Abel map extends,  Propositions~\ref{Prop: RatlSing} and \ref{Proposition: DuBois} show that Hypothesis~\ref{Hypothesis} holds for $\overline{J}/S$, so we can form the family $\operatorname{Pic}^{\underline{0}}(\overline{J}/S)$ of Picard schemes, and  this family is related to the generalized Jacobian by an autoduality isomorphism:

\begin{co}[Autoduality] \label{Co: Autodual}
	If $\overline{J}/S$ is a family of compactified Jacobians extending $\overline{J}_{K}$, then  \eqref{Eqn: AutodualForCurve} extends to an isomorphism
	\begin{equation} \label{Eqn: CurveAutodual}
		\operatorname{Pic}^{\underline{0}}(\overline{J}/S) \cong J^{\underline{0}}
	\end{equation}
	that is pullback by an Abel map when an Abel map is defined.
\end{co}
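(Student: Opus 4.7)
The plan is to combine Theorem~\ref{Theorem: NeronComparison} with the classical description of the N\'eron model of a Jacobian, using autoduality on the generic fiber as the bridge between the two pictures.

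First I would check that Hypothesis~\ref{Hypothesis} applies to $\overline{J}/S$: flatness and rational singularities come from Proposition~\ref{Prop: RatlSing}, and the freeness and base-change properties of $R^{i}p_{*}\calO_{\overline{J}}$ are exactly Corollary~\ref{Corollary: CohomologyFlat}.  Theorem~\ref{Theorem: NeronComparison} then identifies $\operatorname{Pic}^{\underline{0}}(\overline{J}/S)$ with the identity component $N^{\underline{0}}$ of the N\'eron model of $\operatorname{Pic}^{\underline{0}}(J_{K}/K)$.  Since $J_{K}$ is a smooth abelian variety, the classical autoduality isomorphism~\eqref{Eqn: Autodual} makes $\operatorname{Pic}^{\underline{0}}(J_{K}/K)$ canonically isomorphic to $J_{K}$, so by functoriality of the N\'eron model the scheme $\operatorname{Pic}^{\underline{0}}(\overline{J}/S)$ is canonically isomorphic to the identity component of the N\'eron model of $J_{K}$ itself.

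Next I would invoke Raynaud's theorem on the N\'eron model of a Jacobian (\cite[Theorem~9.5/4]{bosch90}): because $X/S$ is regular with smooth geometrically connected generic fiber, the identity component of the N\'eron model of $J_{K}$ exists and equals the relative Picard scheme $\operatorname{Pic}^{\underline{0}}(X/S)$, which in our notation is $J^{\underline{0}}$.  Composing the two identifications produces the isomorphism~\eqref{Eqn: CurveAutodual}, and unwinding the constructions shows that on the generic fiber it specializes to the classical map~\eqref{Eqn: AutodualForCurve}.  For Abel-map compatibility, given an extension $\alpha\colon X \to \overline{J}$ of $\alpha_{K}$, pullback defines an $S$-morphism $\alpha^{*}\colon \operatorname{Pic}^{\underline{0}}(\overline{J}/S) \to J^{\underline{0}}$ whose generic fiber is $\alpha_{K}^{*}$; since the source is smooth and $J^{\underline{0}}=N^{\underline{0}}$, the N\'eron mapping property forces $\alpha^{*}$ to coincide with the isomorphism just constructed.

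The main obstacle I anticipate is bookkeeping rather than substance.  The generalized Jacobian $J^{\underline{0}}$ was defined via \'etale sheafification of the multidegree-zero line-bundle functor, whereas Raynaud's theorem is phrased in terms of the identity component of the relative Picard functor; reconciling these requires the standard fact that, for a family of nodal curves, the identity component of $\operatorname{Pic}(X/S)$ parametrizes precisely the multidegree-zero classes on geometric fibers.  One should also keep track of the sign ambiguity noted after~\eqref{Eqn: AutodualForCurve}, so as to be sure that the isomorphism produced by chaining Theorem~\ref{Theorem: NeronComparison} with Raynaud's theorem really is the inverse (and not the negative inverse) of the autoduality map of the introduction.
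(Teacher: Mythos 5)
Your proposal is correct and follows essentially the same route as the paper: apply Theorem~\ref{Theorem: NeronComparison} to identify $\operatorname{Pic}^{\underline{0}}(\overline{J}/S)$ with $N^{\underline{0}}$, use the N\'eron mapping property to transport the generic-fiber autoduality isomorphism to an isomorphism of N\'eron models, and invoke the Bosch--L\"utkebohmert--Raynaud identification of $J^{\underline{0}}$ with the identity component of the N\'eron model of $J_{K}$, with Abel-map compatibility following because the two maps agree on the generic fiber. The only differences are cosmetic (citation labels and the order in which the three identifications are composed).
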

\begin{proof}	
	Since we have proven that $\overline{J}$ satisfies Hypothesis~\ref{Hypothesis}, we complete the proof by arguing as in the last half of the proof of Corollary~\ref{Co: AutodualityTwo}.  The autoduality isomorphism must equal  pullback by an Abel map when an Abel map is defined because both morphisms agree on the generic fiber.
\end{proof}

\subsection*{Acknowledgements} 
Jesse Leo Kass was partially sponsored by the American Mathematical Society under an AMS--Simons Travel Grant, the Simons Foundation under Grant Number 429929, and the National Security Agency under Grant Number H98230-15-1-0264.  The United States Government is authorized to reproduce and distribute reprints notwithstanding any copyright notation herein. This manuscript is submitted for publication with the understanding that the United States Government is authorized to reproduce and distribute reprints.  

This work was started while the author was a Wissenschaftlicher Mitarbeiter at the Institut f\"{u}r Algebraische Geometrie, Leibniz Universit\"{a}t Hannover.  

The author would like to Eduardo Esteves and Filippo Viviani for helpful conversations about autoduality, Karl Schwede for very helpful conversations about singularity theory, Frank Thorne and Nicola Pagani for helpful comments about exposition, and the anonymous  referees for helpful feedback.

} 

\bibliographystyle{AJPD}

\bibliographystyle{AJPD}



\providecommand{\bysame}{\leavevmode\hbox to3em{\hrulefill}\thinspace}
\providecommand{\MR}{\relax\ifhmode\unskip\space\fi MR }
\providecommand{\MRhref}[2]{%
  \href{http://www.ams.org/mathscinet-getitem?mr=#1}{#2}
}
\providecommand{\href}[2]{#2}

\end{document}